\newtheorem{theorem}{Theorem}
\newtheorem{cor}[theorem]{Corollary}
\newtheorem*{cor*}{Corollary}
\newtheorem{lemma}[theorem]{Lemma}
\newtheorem{fact}[theorem]{Fact}
\newtheorem{proposition}[theorem]{Proposition}
\theoremstyle{definition}
\newtheorem{defi}[theorem]{Definition}
\theoremstyle{remark}
\newtheorem*{remark*}{Remark}
\newtheorem*{remarks*}{Remarks}
\newtheorem{example}[theorem]{Example}
\newcommand{\comment}[1]{}
\newcommand{\Z}{\mathbb Z}
\title[Reflections on equicontinuity]{Reflections on equicontinuity}
\author{Joseph Auslander}
\address{Department of Mathematics, University of Maryland, College Park, MD 20742, USA}
\email{jna@math.umd.edu}
\author{Gernot Greschonig}
\address{Institute of Discrete Mathematics and Geometry, Vienna University of Technology, Wiedner Hauptstra\ss e 8-10, A-1040 Vienna, Austria}
\email{greschg@fastmail.net}
\author{Anima Nagar}
\address{Department of Mathematics, Indian Institute of Technology, Hauz Khas, New Delhi-110016, India}
\email{anima@maths.iitd.ac.in }
\subjclass[2000]{Primary: 37B05, 54H20}
\keywords{compact flows, equicontinuity, almost automorphic points}
\thanks{The second author was supported by the research project S9614 of the Austrian Science Fund (FWF)}
\begin{document}

\begin{abstract}
We study different conditions which turn out to be equivalent to equicontinuity for a transitive compact Hausdorff flow with a general group action.
Among them are a notion of ``regional'' equicontinuity, also known as ``Furstenberg'' condition, and the condition that every point of the phase space is almost automorphic.
Then we study relations on the phase space arising from dynamical properties, among them the regionally proximal relation and two relations introduced by Veech.
We generalize Veech's results for minimal actions of non-Abelian groups preserving a probability measure with respect to the regionally proximal relation.
We provide proofs in the framework of dynamical systems rather than harmonic analysis as given by Veech.
\end{abstract}
\maketitle

Throughout this note let $X$ be a compact Hausdorff space and $T$ a topological group continuously acting on the left of  $X$, so that $(X,T)$ is a compact Hausdorff flow.
There exists a unique uniform structure on $X$, defined by a set $\mathcal U$ of \emph{entourages} coincident with the set of neighborhoods of the diagonal $\Delta=\{(x,x):x\in X\}$ in the product topology on $X\times X$.
Given an entourage $\alpha\in\mathcal U$ and a point $x\in X$ we define the $\alpha$-neighborhood of $x$ by $\{y\in X:(y,x)\in\alpha\}$, and using these sets as a neighborhood base at $x$ the uniform structure defines the topology on $X$.
Given entourages $\alpha, \beta\in\mathcal U$ we define the product by
\begin{equation*}
\alpha\beta=\{(x,z)\in X\times X:\textup{ there exists }y\in X \textup{ s. th. } (x,y)\in\alpha\textup{ and } (y,z)\in\beta\}
\end{equation*}
and the inverse by $\alpha^{-1}=\{(y,x)\in X\times X:(x,y)\in\alpha\}$.
If $d$ is a metric on $X$ compatible with the topology, then a set $\alpha\subset X\times X$ is an element of $\mathcal U$ if and only if there exists an $\varepsilon>0$ so that $\{(x,y)\in X\times X:d(x,y)<\varepsilon\}\subset\alpha$.
For a general (non-metrizable) uniform structure, there is always a family $\mathcal D=\{d_i:i\in I\}$ of pseudometrics on $X$ so that $\alpha\subset X\times X$ is an element of $\mathcal U$ if and only if there exists an element $d_i\in\mathcal D$ and $\varepsilon>0$ with $\{(x,y)\in X\times X:d_i (x,y)<\varepsilon\}\subset\alpha$.
We call a point $x\in X$ an \emph{equicontinuity point}, if for every entourage $\alpha\in\mathcal U$ there exists a neighborhood $U_x$ of $x$ so that for every $y\in U_x$ and every $t\in T$ it holds that $(tx,ty)\in\alpha$.
If every point in $X$ is an equicontinuity point, then by a finite covering argument for every entourage $\alpha\in\mathcal U$ there exists a entourage $\beta\in\mathcal U$ so that $(x,y)\in\beta$ implies $(tx,ty)\in\alpha$ for all $t\in T$.
Such a flow $(X,T)$ is called \emph{equicontinuous}.
We denote the orbit closure of a point $x\in X$ by $\bar{\mathcal O}_T(x)$, hence a flow is transitive if $\bar{\mathcal O}_T(x)=X$ for some $x\in X$ and minimal if $\bar{\mathcal O}_T(x)=X$ for all $x\in X$.
The closure of $T$ in the product space $X^X$ has a natural semigroup operation continuous in the left argument, the \emph{enveloping semigroup} $\mathcal E(X,T)$ of the flow $(X,T)$.

We call two points $x,y\in X$ \emph{proximal} if there exists a net $\{t_i\}_{i\in I}\subset T$ and a point $z\in X$ with $t_i x\to z$ and $t_i y\to z$, otherwise the points are called \emph{distal}.
A point in $X$ is called distal if it is distal to every distinct point, and a flow is called distal if any two distinct points are distal (i.e. every point in $X$ is a distal point).
It is easy to see that every equicontinuous flow is distal.
For a distal flow the enveloping semigroup is a group algebraically, but in general not a topological group.
Moreover, every distal flow admits a decomposition of its phase space into minimal orbit closures.

The main topic of this note are conditions closely related to equicontinuity.
Most of these conditions have been defined for compact metric phase spaces initially, however, all of them can easily be translated into the setting of uniform spaces.
Therefore we decided to present the results for a compact Hausdorff phase space, as long as no further condition is mentioned.

\begin{defi}
The flow $(X,T)$ fulfills the \emph{Furstenberg condition}, if for every entourage $\alpha\in\mathcal U$ there exists an entourage $\beta\in\mathcal U$ so that whenever $(t x,x)\in\beta$ and $(y,x)\in\beta$ for some $t\in T$ we have $(t y,x)\in\alpha$.
\end{defi}

\begin{remark*}
Hence a compact metric flow $(X,T)$ fulfills the \emph{Furstenberg condition}, if for every $\varepsilon>0$ there exists a universal $\delta>0$ so that whenever $d(t x,x)<\delta$ and $d(y,x)<\delta$ for some $t\in T$ we have $d(t y,x)<\varepsilon$.
\end{remark*}

\begin{defi}[cf. \cite{Ve1}]
A point $x\in X$ is called \emph{almost automorphic}, if for every net $\{t_i\}_{i\in I}\subset T$ with $t_i x\to y$ for some $y\in X$ it holds that $t_i^{-1} y\to x$.
An almost automorphic cannot be proximal to a distinct point and is thus a distal point.
\end{defi}

For an Abelian group $T$, flows with almost automorphic points have been studied in the paper \cite{Ve1}.
The existence of an almost automorphic point is sufficient for a residual set of almost automorphic points, and then the flow is an almost automorphic (i.e. one to one onto a residual set) extension of its (always non-trivial) maximal equicontinuous factor.
Here in this note we use the stronger condition that \emph{every} point in $X$ is almost automorphic, however without the requirement that $T$ is Abelian.

\begin{theorem}\label{th:fc}
The Furstenberg condition is equivalent to equicontinuity of the flow, and these equivalent properties imply that every point in $X$ is an almost automorphic point.
\end{theorem}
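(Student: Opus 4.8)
The statement bundles three assertions: equicontinuity $\Rightarrow$ Furstenberg condition, Furstenberg condition $\Rightarrow$ equicontinuity, and equicontinuity $\Rightarrow$ every point almost automorphic. My plan is to dispatch the first and third quickly and concentrate on the second. For the first: given $\alpha\in\mathcal U$ choose a symmetric $\alpha_1$ with $\alpha_1\alpha_1\subseteq\alpha$, use equicontinuity to get a symmetric $\gamma$ with $(a,b)\in\gamma\Rightarrow(ta,tb)\in\alpha_1$ for all $t$, and put $\beta=\gamma\cap\alpha_1$; if $(tx,x)\in\beta$ and $(y,x)\in\beta$ then $(ty,tx)\in\alpha_1$ and $(tx,x)\in\alpha_1$, so $(ty,x)\in\alpha$. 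For the third: an equicontinuous flow has a base of $T$-invariant entourages, namely the sets $\alpha^{*}=\{(a,b):(ta,tb)\in\alpha\text{ for all }t\in T\}$ — these are entourages because $\alpha^{*}$ contains the $\gamma$ furnished by equicontinuity, and $T$-invariant because right translation permutes $T$ — so if $t_ix\to y$ then for each $\alpha'$ eventually $(t_ix,y)\in(\alpha')^{*}$, whence $(x,t_i^{-1}y)\in(\alpha')^{*}\subseteq\alpha'$ by invariance, i.e. $t_i^{-1}y\to x$, and $x$ is almost automorphic.

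The substantial implication is that the Furstenberg condition forces equicontinuity. I would run this through the regionally proximal relation $Q(X,T)$, the set of $(x,y)$ admitting nets $x_i\to x$, $y_i\to y$ and $t_i\in T$ with $(t_ix_i,t_iy_i)\to\Delta$. First I would record that $Q(X,T)=\Delta$ is equivalent to equicontinuity, needing only the direction $Q(X,T)=\Delta\Rightarrow$ equicontinuity: were the flow not equicontinuous there would be $\alpha_0$ and, for every entourage $\beta$, points $x_\beta,y_\beta$ with $(x_\beta,y_\beta)\in\beta$ and $t_\beta\in T$ with $(t_\beta x_\beta,t_\beta y_\beta)\notin\alpha_0$; passing to subnets gives $u\ne v$ with $t_\beta x_\beta\to u$, $t_\beta y_\beta\to v$, and since $(t_\beta^{-1}(t_\beta x_\beta),t_\beta^{-1}(t_\beta y_\beta))=(x_\beta,y_\beta)\to\Delta$ this forces $(u,v)\in Q(X,T)$, a contradiction. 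So it remains to deduce $Q(X,T)=\Delta$ from the Furstenberg condition.

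This I would do in two steps. Step one: the Furstenberg condition implies the flow is distal — equivalently $\mathcal E(X,T)$ has the identity as its only idempotent — so every point is almost periodic and every orbit closure is minimal (as recalled in the excerpt for distal flows). The idea is that a hypothetical non-trivial proximal pair can, via a minimal idempotent $u=\lim t_i$, be transported to a pair $(a,r)$ with $r$ almost periodic, $ur=r$, $t_ia\to r$ and $t_ir\to r$, and then the Furstenberg condition should be pushed to a contradiction — I expect this to be delicate, since the condition by its form only ``sees'' near–fixed-point behaviour, so one has to bring in recurrence inside the minimal set $\bar{\mathcal O}_T(r)$. Step two: assume distality and suppose $(x,y)\in Q(X,T)$ with $x\ne y$, with witnessing nets $x_i\to x$, $y_i\to y$ and $t_ix_i\to z$, $t_iy_i\to z$. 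Passing to inverses, $s_i=t_i^{-1}$ together with $u_i=t_ix_i\to z$, $v_i=t_iy_i\to z$ satisfies $s_iu_i=x_i\to x$, $s_iv_i=y_i\to y$, so $s_i$ separates two nets both converging to $z$. Take a subnet with $s_iz\to c$; since $\bar{\mathcal O}_T(z)$ is minimal, $z\in\bar{\mathcal O}_T(c)$, so some net $\sigma_j\in T$ has $\sigma_jc\to z$, and a diagonal argument produces a net $r_k=\sigma_{j(k)}s_{i(k)}$ with $r_kz\to z$ while $r_ku_{i(k)}=\sigma_{j(k)}x_{i(k)}$ and $r_kv_{i(k)}=\sigma_{j(k)}y_{i(k)}$ still accumulate (after a further subnet) at points $x'$ and $y'$ one arranges to be distinct. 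Then for any $\alpha$, taking $\beta$ from the Furstenberg condition, for large $k$ one has $(r_kz,z)\in\beta$ and $(u_{i(k)},z)\in\beta$, hence $(r_ku_{i(k)},z)\in\alpha$, so $(x',z)\in\overline\alpha$; likewise $(y',z)\in\overline\alpha$. As $\alpha$ ranges over all entourages, $x'=z=y'$, contradicting $x'\ne y'$; hence $Q(X,T)=\Delta$ and the flow is equicontinuous.

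The main obstacle sits in Step one: extracting distality (no idempotent but the identity in $\mathcal E(X,T)$) from a hypothesis that only constrains how return maps move small neighbourhoods. The secondary difficulty is the compactness/diagonal manoeuvre in Step two, which must at once restore the base point ($r_kz\to z$) and keep the separation ($x'\ne y'$), with all entourages aligned tightly enough that the single closing application of the Furstenberg condition actually goes through; everything else is routine.
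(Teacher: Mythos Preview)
Your treatment of the two easy implications (equicontinuity $\Rightarrow$ Furstenberg, equicontinuity $\Rightarrow$ every point almost automorphic) is fine and matches the paper's. The difficulty, as you correctly locate, is Furstenberg $\Rightarrow$ equicontinuity, and here your proposal has a genuine gap: Step one is not proved. You yourself flag that extracting distality from the Furstenberg condition ``should be pushed to a contradiction'' but only sketch a setup (a proximal pair $(a,r)$ with $r$ almost periodic and $t_i a\to r$, $t_i r\to r$) without carrying it through. I do not see how to finish that line: the Furstenberg condition at $r$ lets you control $t\,\cdot$ on the $\beta$-ball around $r$ for near-returns $t$, which pushes nearby points \emph{towards} $r$, but gives no handle on pulling $a$ itself close to $r$, and $a$ need not lie in $\bar{\mathcal O}_T(r)$. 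Since your Step two genuinely consumes distality (both for minimality of $\bar{\mathcal O}_T(z)$, to get $z\in\bar{\mathcal O}_T(c)$, and for bijectivity of Ellis-group elements, to force $x'\neq y'$), the whole scheme stalls at Step one.

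The paper avoids this detour entirely with a short direct argument that makes no use of distality, the Ellis semigroup, or the regionally proximal relation. Fix a point $z$ and an entourage $\alpha$; pick $\alpha'$ with $\alpha'^2\subset\alpha$ and let $\beta$ come from the Furstenberg condition for $\alpha'$. The $\beta$-balls around orbit points $tz$ cover $\bar{\mathcal O}_T(z)$, so a finite $K\subset T$ suffices; by finiteness of $K$ there is $\beta'$ with $(y,z)\in\beta'\Rightarrow (t'y,t'z)\in\beta$ for all $t'\in K$. Now for \emph{any} $\tau\in T$ choose $t'\in K$ with $(\tau z,t'z)\in\beta$ and set $x=t'z$, $t=\tau t'^{-1}$: then $(tx,x)=(\tau z,t'z)\in\beta$ is a near-return at $x$, and for $(y,z)\in\beta'$ we have $(t'y,x)\in\beta$, so the Furstenberg condition gives $(\tau y,t'z)\in\alpha'$, hence $(\tau y,\tau z)\in\alpha'\beta^{-1}\subset\alpha$. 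Thus $z$ is an equicontinuity point, and as $z$ was arbitrary the flow is equicontinuous. The key idea you are missing is this manufactured near-return: rather than needing recurrence or distality to produce $(tx,x)\in\beta$, one writes an arbitrary $\tau$ as $(\tau t'^{-1})\cdot t'$ with $t'$ drawn from a fixed finite set, so that $\tau t'^{-1}$ is automatically a $\beta$-near-return at $t'z$.
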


\begin{proof}
Let $z\in X$ and the entourage $\alpha\in\mathcal U$ be arbitrary, choose an entourage $\alpha'\in\mathcal U$ with $\alpha'^2\subset\alpha$, and let the entourage $\beta\in\mathcal U$ with $\beta^{-1}\subset\alpha'$ be related to $\alpha'$ according to the Furstenberg condition.
The $\beta$-neighborhoods of the points in the $T$-orbit of $z$ define an open covering of the orbit closure $\bar{\mathcal O}_T(z)$, and hence there is a finite set $K\subset T$ so that the $\beta$-neighborhood of the set $\{tz:t\in K\}$ contains $\bar{\mathcal O}_T(z)$.
By the finiteness of $K$ we can choose an entourage $\beta'\in\mathcal U$ so that $(y,z)\in\beta'$ for $y\in X$ implies $(t y,t z)\in\beta$ for all $t\in K$.
Now let $\tau\in T$ be arbitrary, select an element $t'\in K$ with $(\tau z,t' z)\in\beta$, and put $x=t' z$ and $t=\tau t'^{-1}$.
Then
\begin{equation*}
(t x,x)=(\tau t'^{-1}t' z,t' z)=(\tau z,t' z)\in\beta ,
\end{equation*}
and for all $y\in X$ with $(y,z)\in\beta'$ it holds that $(t' y,x)\in\beta$.
By the Furstenberg condition, $(t x,x)\in\beta$, and $(t' y,x)\in\beta$, we have
\begin{equation*}
(\tau y,t'z)=(\tau t'^{-1}t' y,t'z)=(t(t' y),x)\in\alpha'.
\end{equation*}
That is
\begin{equation*}
(\tau y,\tau z)\in\alpha'\beta^{-1}\subset\alpha'^2\subset\alpha
\end{equation*}
for all $y\in X$ with $(y,z)\in\beta'$.
Since $\beta'$ depends only on $\beta$ and $z$ while $\tau\in T$ was arbitrary, the point $z$ is an equicontinuity point.
Thus every point in $X$ is an equicontinuity point, whence the flow $(X,T)$ is equicontinuous.

Conversely, suppose that the flow $(X,T)$ is equicontinuous, let $\alpha\in\mathcal U$ be an arbitrary entourage and choose $\alpha'\in\mathcal U$ with $\alpha'^{-1}\alpha'\subset\alpha$.
Let $\beta\in\mathcal U$ with $\beta\subset\alpha'$ be an entourage so that $(x,y)\in\beta$ implies $(tx,ty)\in\alpha'$ for all $t\in T$.
Then $(t x,x)\in\beta$ and $(y,x)\in\beta$ for some $t\in T$ imply $(t y,x)\in\alpha'^{-1}\beta\subset\alpha'^{-1}\alpha'\subset\alpha$, whence the Furstenberg condition holds.

Let $(X,T)$ be a equicontinuous flow, let the entourage $\alpha\in\mathcal U$ be arbitrary, and let $\beta\in\mathcal U$ be an entourage such that $(x,y)\in\beta$ is sufficient for $(t x,t y)\in\alpha$ for all $t\in T$.
Given $x,y\in X$ and a net $\{t_i\}_{i\in I}\subset T$ with $t x_i\to y$ we can conclude that
\begin{equation*}
(t_i^{-1} y,t_i^{-1}t_i x)=(t_i^{-1} y,x)\in\alpha
\end{equation*}
for all elements $i\in I$ with $(t_i x, y)\in\beta$.
Since the entourage $\alpha$ was arbitrary and the $\alpha$-neighborhoods of $x$ for all $\alpha\in\mathcal U$ define a neighborhood base at $x$, we have $t_i^{-1} y\to x$.
Therefore every point in $X$ is an almost automorphic point.
\end{proof}

Apart from the paper \cite{Ve1}, Veech studied minimal flows with almost automorphic points and an Abelian group $T$ in terms of a relation on $X$, which will be a subject of this note later.
From Theorem 1.2 in \cite{Ve2} it follows that a minimal flow $(X,T)$ where every point in $X$ is almost automorphic is an equicontinuous flow.
However, we want to present a streamlined proof even for non-Abelian $T$ and flows not necessarily minimal, in terms of the algebraic theory of topological dynamical systems.

\begin{proposition}\label{prop:inv}
Let $(X,T)$ be a flow so that \emph{every} point in $X$ is almost automorphic.
Then the enveloping semigroup $\mathcal E(X,T)$ is a group, and the operation of group inversion is a continuous mapping from $\mathcal E(X,T)$ onto $\mathcal E(X,T)$ with respect to the product topology on $X^X$.
\end{proposition}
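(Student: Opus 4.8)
The plan is to deduce the algebraic group structure of $\mathcal E(X,T)$ from distality and then to obtain continuity of inversion via a closed-graph argument, the only substantial ingredient being that inversion behaves correctly along nets drawn from $T$. First I would note that, since every point of $X$ is almost automorphic, every point is a distal point, so $(X,T)$ is a distal flow; as recalled in the introduction, the enveloping semigroup $\mathcal E=\mathcal E(X,T)$ is then a group algebraically, and $p\mapsto p^{-1}$ is a self-inverse bijection of $\mathcal E$. Hence surjectivity in the statement is automatic, and it remains to show that $\iota\colon\mathcal E\to\mathcal E$, $\iota(p)=p^{-1}$, is continuous for the topology inherited from $X^X$.

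The key step is the following lemma: if $\{t_i\}_{i\in I}\subset T$ is a net with $t_i\to p$ in $X^X$, then $t_i^{-1}\to p^{-1}$ in $X^X$. To prove it, fix $w\in X$ and set $x=p^{-1}(w)$, so that $p(x)=w$; pointwise convergence $t_i\to p$ gives $t_i x\to w$, and almost automorphy of the point $x$ then forces $t_i^{-1}w\to x=p^{-1}(w)$. Since $w\in X$ was arbitrary, this says precisely that $t_i^{-1}\to p^{-1}$ pointwise, i.e.\ in $X^X$.

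To conclude, consider $S=\{(t,t^{-1}):t\in T\}\subset\mathcal E\times\mathcal E$. Every point of the closure $\overline S$ is the limit of a net $(t_i,t_i^{-1})$ from $S$; writing $t_i\to p$, the lemma gives $t_i^{-1}\to p^{-1}$, so by Hausdorffness of $X^X$ the limit is $(p,p^{-1})$. Conversely, since $\mathcal E$ is the closure of $T$ in $X^X$, each $p\in\mathcal E$ is a limit of some net $\{t_i\}\subset T$, whence $(t_i,t_i^{-1})\to(p,p^{-1})$ and $(p,p^{-1})\in\overline S$. Thus $\overline S$ is exactly the graph of $\iota$, and therefore this graph is closed in $\mathcal E\times\mathcal E$. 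Since $\mathcal E$ is a closed subset of the compact Hausdorff space $X^X$ it is itself compact Hausdorff, and a map into a compact Hausdorff space with closed graph is continuous (the first-coordinate projection $\mathcal E\times\mathcal E\to\mathcal E$ is a closed map because the second factor is compact); hence $\iota$ is continuous. I expect the only real difficulty to be the lemma: multiplication in $\mathcal E$ is continuous only in the left variable, so one cannot pass to the limit directly in the identity $p_i^{-1}p_i=\mathrm{id}$ for an arbitrary convergent net $\{p_i\}\subset\mathcal E$, and the point of treating nets inside $T$ first --- where the definition of almost automorphy applies verbatim --- and only then passing to closures is exactly what circumvents this.
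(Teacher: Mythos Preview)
Your proof is correct and follows essentially the same approach as the paper: both isolate the identical key lemma (if $t_i\to p$ in $X^X$ with $t_i\in T$, then $t_i^{-1}\to p^{-1}$, proved by writing an arbitrary $w$ as $p(x)$ and invoking almost automorphy of $x$), and both then use compactness and Hausdorffness of $\mathcal E$ to upgrade this to continuity of inversion on all of $\mathcal E$. The only cosmetic difference is in this last step: the paper argues directly via subnets and approximation of each $g_i\in\mathcal E$ by elements of $T$, whereas you package the same content as the observation that $\overline{\{(t,t^{-1}):t\in T\}}$ equals the graph of $\iota$ and invoke the closed-graph criterion---a slightly cleaner formulation of the same idea.
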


\begin{lemma}
Suppose that every point in $X$ is almost automorphic.
Then $(X,T)$ is distal whence the enveloping semigroup $\mathcal E(X,T)$ is a group.
If $\{t_i\}_{i\in I}\subset T$ is a net with $t_i\to g\in\mathcal E(X,T)$, then the net $\{t_i^{-1}\}_{i\in I}\subset T$ is convergent to $g^{-1}\in\mathcal E(X,T)$.
\end{lemma}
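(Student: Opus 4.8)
The plan is to dispatch the three assertions in order. \emph{Distality.} Since every point of $X$ is almost automorphic, a proximal pair must be diagonal: if $(x,w)$ is proximal, choose a net $\{t_i\}_{i\in I}\subset T$ and $z\in X$ with $t_ix\to z$ and $t_iw\to z$; applying almost automorphy to $x$ gives $t_i^{-1}z\to x$, and applying it to $w$ gives $t_i^{-1}z\to w$, so $x=w$ by uniqueness of limits in the Hausdorff space $X$. Hence $(X,T)$ is distal, and, as recalled in the introduction, the enveloping semigroup of a distal flow is a group; note moreover that its identity is $\mathrm{id}_X$ (the image of the identity of $T$) and that $\mathcal E(X,T)$ is closed in the compact space $X^X$.

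\emph{Continuity of inversion along nets.} Let $\{t_i\}_{i\in I}\subset T$ with $t_i\to g$ in $X^X$. Because $X^X$ is compact, it suffices to show that every convergent subnet of $\{t_i^{-1}\}_{i\in I}$ has limit $g^{-1}$; the assertion $t_i^{-1}\to g^{-1}$ then follows by the usual compactness argument. So suppose $\{t_j^{-1}\}$ is a subnet converging to some $h\in X^X$. Then $h\in\mathcal E(X,T)$ since $\mathcal E(X,T)$ is closed, while the underlying subnet $\{t_j\}$ still satisfies $t_j\to g$.

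Now fix $x\in X$ and put $y=gx=\lim_j t_jx$. Almost automorphy of $x$, applied to the net $\{t_j\}$, gives $t_j^{-1}y\to x$; on the other hand the pointwise convergence $t_j^{-1}\to h$ evaluated at $y$ gives $t_j^{-1}y\to hy$, so $hy=x$, i.e.\ $(hg)x=x$. As $x$ was arbitrary, $hg=\mathrm{id}_X$, hence $h=g^{-1}$ in the group $\mathcal E(X,T)$, which is what we wanted. The one point requiring care is the subnet bookkeeping --- making sure each subnet of $\{t_i^{-1}\}$ we extract lies over a subnet of $\{t_i\}$ that still converges to $g$, so that almost automorphy (which is a statement about arbitrary nets) may legitimately be invoked; apart from that, the argument reduces to the one-line computation above.
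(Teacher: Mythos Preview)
Your proof is correct and follows essentially the same idea as the paper's. The paper is slightly more direct: rather than passing to subnets, it uses surjectivity of $g$ (since $\mathcal E(X,T)$ is a group) to write every $y\in X$ as $gx$ and then applies almost automorphy of $x$ to the full net to get $t_i^{-1}y=t_i^{-1}(gx)\to x=g^{-1}y$, so your subnet extraction is harmless but unnecessary.
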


\begin{proof}
Since every point in $X$ is almost automorphic and thus a distal point, the flow $(X,T)$ is clearly distal and the enveloping semigroup $\mathcal E(X,T)$ is a group.
Moreover, for every $x\in X$ holds $t_i^{-1}(g x)\to x$.
Since $\mathcal E(X,T)$ is a group, for every $y\in X$ there is a unique $x\in X$ with $gx=y$.
Thus we have $t_i^{-1} y=t_i^{-1}(gx)\to x=g^{-1} y$ for every $y\in X$, and the net $\{t_i^{-1}\}_{i\in I}\subset T$ defines the element $g^{-1}\in\mathcal E(X,T)$ as its limit point in $X^X$.
\end{proof}

\begin{proof}[Proof of Proposition \ref{prop:inv}]
Suppose that $\{g_j\}_{j\in J}\subset\mathcal E(X,T)$ is a net with $g_j\to g\in\mathcal E(X,T)$, and $\{g_i\}_{i\in I}$ is a subnet with $g_i^{-1}\to g'\in\mathcal E(X,T)$.
Let $W\subset\mathcal E(X,T)$ be an arbitrary open neighborhood of $g'$.
We may suppose that $g_i^{-1}\in W$ for every $i\in I$ and choose for every $i\in I$ an element $t_i\in T$ sufficiently close to $g_i$, so that $t_i\to g$ and, by the lemma, $t_i^{-1}\in W$.
The lemma implies also $t_i^{-1}\to g^{-1}$ so $g^{-1}\in\overline W$, and since $\mathcal E(X,T)$ is a Hausdorff space $g'$ and $g^{-1}$ have to be identical.
\end{proof}

\begin{remark*}
Though in general $\mathcal E(X,T)$ acts just algebraically and not continuously on $X$, the condition that every point is almost automorphic can be defined with respect to the action of $\mathcal E(X,T)$ on $X$ as well.
From the arguments above and the inclusion of $T$ in $\mathcal E(X,T)$ it follows then immediately that both conditions are equivalent.
\end{remark*}

\begin{theorem}\label{th:cp}
For a compact Hausdorff flow $(X,T)$ every point in $X$ is almost automorphic if and only if $\mathcal E(X,T)$ is a compact Hausdorff topological group in the topology of pointwise convergence in $X^X$.
\end{theorem}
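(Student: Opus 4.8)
The plan is to prove the two implications separately: the forward direction rests on Proposition~\ref{prop:inv} plus a separate‑continuity argument, while the converse is a direct net argument using compactness of the enveloping semigroup.

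\emph{Assume every point of $X$ is almost automorphic.} Since $\mathcal E=\mathcal E(X,T)$ is a closed subset of the compact Hausdorff space $X^X$, it is automatically compact and Hausdorff, so the only substantive point is that it is a topological group. By Proposition~\ref{prop:inv}, $\mathcal E$ is a group and the inversion map $\iota\colon g\mapsto g^{-1}$ is continuous; being a continuous involution, $\iota$ is in fact a homeomorphism of $\mathcal E$. By the very construction of the enveloping semigroup, each right translation $R_h\colon g\mapsto gh$ is continuous (this is the one‑sided continuity of the semigroup operation). Consequently each left translation $L_g\colon h\mapsto gh$ is continuous as well, since $L_g=\iota\circ R_{g^{-1}}\circ\iota$. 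Thus multiplication on $\mathcal E$ is separately continuous, and as $\mathcal E$ is a compact (hence locally compact) Hausdorff group, Ellis's joint continuity theorem upgrades this to joint continuity of multiplication; together with continuity of inversion this shows $\mathcal E$ is a compact Hausdorff topological group in the topology of pointwise convergence.

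\emph{Assume $\mathcal E=\mathcal E(X,T)$ is a compact Hausdorff topological group in the topology of pointwise convergence.} Fix $x\in X$ and a net $\{t_i\}_{i\in I}\subset T$ with $t_ix\to y$, and suppose for contradiction that $t_i^{-1}y\not\to x$. Then there is a neighborhood $U$ of $x$ such that the indices $i$ with $t_i^{-1}y\notin U$ are cofinal in $I$; restrict to the corresponding subnet, along which $t_i^{-1}y\notin U$ for all $i$ while still $t_ix\to y$. By compactness of $\mathcal E$ pass to a further subnet $\{t_j\}$ with $t_j\to g$ in $X^X$. Pointwise convergence gives $t_jx\to gx$, so $gx=y$ since $X$ is Hausdorff; and since inversion is continuous on $\mathcal E$ we get $t_j^{-1}\to g^{-1}$ in $X^X$, hence $t_j^{-1}y\to g^{-1}y=g^{-1}(gx)=x\in U$, contradicting $t_j^{-1}y\notin U$. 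Hence $t_i^{-1}y\to x$; as $x$ and the net were arbitrary, every point of $X$ is almost automorphic.

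The step I expect to be the real obstacle is the joint continuity of multiplication in the forward direction: the enveloping semigroup carries only one‑sided continuity, and separate continuity alone does not make a group topological, so one is forced to feed compactness into Ellis's theorem. What makes this go through is precisely the continuity of inversion supplied by Proposition~\ref{prop:inv}, which is the ingredient that converts continuity of right translations into continuity of left translations. The converse, by contrast, is little more than unwinding the definition of almost automorphy together with the routine observation that every net in $T$ subconverges in the compact semigroup $\mathcal E(X,T)$ and that inversion there sends $t_j$ to $t_j^{-1}$.
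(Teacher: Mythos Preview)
Your proof is correct and follows essentially the same route as the paper: in the forward direction you invoke Proposition~\ref{prop:inv} for continuous inversion, then use the identity $L_g=\iota\circ R_{g^{-1}}\circ\iota$ (equivalently, $gh=(h^{-1}g^{-1})^{-1}$) to pass from left-argument continuity to separate continuity, and finish with Ellis's joint continuity theorem; in the converse you use compactness to extract a subnet convergent in $\mathcal E$, apply continuous inversion, and handle the full net by the same subnet contradiction the paper sketches.
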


\begin{proof}
Suppose that every point in $X$ is almost automorphic.
By Proposition \ref{prop:inv} the set $\mathcal E(X,T)$ is a group with left-continuous multiplication and continuous group inversion $g\mapsto g^{-1}$.
The algebraic identity $gh=(h^{-1} g^{-1})^{-1}$ implies that the group multiplication in $\mathcal E(X,T)$ is separately continuous.
By Ellis' joint continuity theorem \cite{El1} the group $\mathcal E(X,T)$ is a compact topological group.

Suppose that $\mathcal E(X,T)$ is a compact Hausdorff topological group, and let $x\in X$ with a net $\{t_j\}_{j\in J}\subset T$ so that $t_j x\to y\in X$.
By the compactness of $X^X$ there exists a subnet $\{t_i\}_{i\in I}$ with $t_i\to g\in\mathcal E(X,T)$, and the continuity of group inversion in $\mathcal E(X,T)$ implies that $t_i^{-1}\to g^{-1}$, hence $t_i^{-1}y\to g^{-1}y=x$.
Thus $t_j^{-1}y\to x$ holds also for the net $\{t_j\}_{j\in J}\subset T$, since otherwise (for a subnet) $t_j^{-1}y\to x'\neq x$.
\end{proof}

It is has been shown for a minimal flow $(X,T)$ that the continuity of all elements of $\mathcal E(X,T)$ as mappings on $X$ is equivalent to the equicontinuity of the flow.
It remains to prove that the elements of $\mathcal E(X,T)$ act continuously and not just algebraically on the minimal orbit closures in $X$.

\begin{theorem}\label{th:eaa}
A compact Hausdorff flow $(X,T)$ has the property that \emph{every} point is almost automorphic if and only if it is distal and every minimal set is equicontinuous.
\end{theorem}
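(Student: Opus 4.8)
The plan is to prove the two implications separately, with essentially all the work in the ``only if'' direction; there the point, as anticipated in the paragraph preceding the statement, is to show that on each minimal set the elements of the enveloping semigroup act continuously, after which the recalled characterization of equicontinuity for minimal flows finishes the job. So assume first that every point of $X$ is almost automorphic. Since an almost automorphic point is distal, $(X,T)$ is distal. Let $M\subseteq X$ be a minimal set. I would first note that every point of the subflow $(M,T)$ is again almost automorphic: if $x\in M$ and $t_ix\to y$ along a net in $T$, then $y\in M$ because $M$ is closed and $T$-invariant, and $t_i^{-1}y\to x$ by hypothesis, with all convergence taking place inside $M$. Hence, by Theorem \ref{th:cp} applied to $(M,T)$, the enveloping semigroup $G:=\mathcal E(M,T)$ is a compact Hausdorff topological group; in particular its multiplication is jointly (not merely left) continuous.

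Now fix $x_0\in M$. By minimality the evaluation map $\pi\colon G\to M$, $\pi(g)=gx_0$, is a continuous surjection from the compact space $G$ onto the Hausdorff space $M$, and it satisfies $\pi(gh)=g(\pi(h))$. I claim every $g\in G$ is a continuous self-map of $M$: given a net $m_\alpha\to m$ in $M$, choose $h_\alpha\in G$ with $\pi(h_\alpha)=m_\alpha$, pass to a subnet with $h_\beta\to h$ in the compact group $G$, observe $\pi(h)=m$ (from continuity of $\pi$ and $m_\beta\to m$), and use joint continuity of multiplication to get $g(m_\beta)=\pi(gh_\beta)\to\pi(gh)=g(m)$. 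Since this applies to every subnet of $\{m_\alpha\}$ and always yields the single point $g(m)$, we get $g(m_\alpha)\to g(m)$, so $g$ is continuous. Thus all elements of $\mathcal E(M,T)$ act continuously on the minimal set $M$, and by the recalled equivalence for minimal flows $(M,T)$ is equicontinuous. This establishes the ``only if'' direction.

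For the converse, assume $(X,T)$ is distal and every minimal set is equicontinuous. A distal flow decomposes into minimal orbit closures, so each $x\in X$ lies in the minimal --- hence equicontinuous --- set $M=\bar{\mathcal O}_T(x)$; by Theorem \ref{th:fc} every point of $(M,T)$ is almost automorphic. If $t_ix\to y$ along a net in $T$, then $y\in M$ because $M$ is closed and invariant, and almost automorphy of $x$ in $(M,T)$ gives $t_i^{-1}y\to x$; hence $x$ is almost automorphic in $(X,T)$.

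The main obstacle is the step in the first implication promoting ``$\mathcal E(M,T)$ is a compact topological group'' to ``each of its elements is continuous on $M$'': the multiplication of an enveloping semigroup is in general only left continuous, and it is precisely the joint continuity supplied by Theorem \ref{th:cp}, together with lifting nets through the evaluation map $\pi$, that makes the net-chasing go through. An alternative that avoids the cited equivalence would be to identify $M$ with the homogeneous space $G/H$, where $H$ is the stabilizer of $x_0$ --- a homeomorphism since $G/H$ is compact and $M$ Hausdorff --- and to use that on a compact group the family of all left translations is equicontinuous because every neighbourhood of the identity contains a conjugation-invariant one; this descends to $G/H$, but then one owes a routine check that the quotient uniformity on $G/H$ coincides with the unique uniformity of the compact space $M$.
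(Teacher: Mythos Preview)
Your proof is correct and follows the same overall strategy as the paper: use Theorem~\ref{th:cp} to obtain a compact topological group, show its elements act continuously on each minimal set, and then invoke the recalled characterization of equicontinuity for minimal flows. The execution differs in two minor but noteworthy ways. First, the paper applies Theorem~\ref{th:cp} once to the global flow $(X,T)$ and works with $\mathcal E(X,T)$ throughout, whereas you restrict to each minimal set $M$, observe that every point of $(M,T)$ is again almost automorphic, and apply the theorem to $(M,T)$; both are legitimate. Second, for the continuity of a fixed $g$ the paper takes precisely the homogeneous-space route you sketch as an ``alternative'': it identifies $\bar{\mathcal O}_T(x)$ with $\mathcal E(X,T)/\mathcal E_x$ via $g\mathcal E_x\mapsto gx$ and reads off continuity of the enveloping semigroup from left multiplication on the quotient. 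Your primary argument---lifting a convergent net in $M$ through the evaluation map $\pi$ to $G$, passing to a convergent subnet by compactness, and using joint (in fact right-) continuity of multiplication---is a direct substitute that bypasses the quotient formalism; the ``every subnet has a further subnet converging to $g(m)$'' pattern is exactly what is needed. Either route lands on the same cited fact about minimal flows with a continuous enveloping semigroup. The converse direction is handled identically in both proofs.
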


\begin{proof}
Suppose that every point in $X$ is almost automorphic.
By Theorem \ref{th:cp} $\mathcal E(X,T)$ is a compact topological group, however acting algebraically on $(X,T)$.
For every point $x\in X$ the isotropy subgroup
\begin{equation*}
\mathcal E_x=\{g\in\mathcal E(X,T):gx=x\}
\end{equation*}
is closed in the product topology on $X^X$, and for every $g\in\mathcal E(X,T)$ it holds that $g\mathcal E_x g^{-1}\subset\mathcal E_{gx}$.
For every $x\in X$ we have $\{gx:g\in\mathcal E(X,T)\}=\bar{\mathcal O}_T(x)$, and thus the isotropy subgroups are conjugate on every orbit closure in $X$.
Hence $\mathcal E_{y}=g\mathcal E_x g^{-1}$ holds for $y=gx\in\bar{\mathcal O}_T(x)$ with suitable $g\in\mathcal E(X,T)$.
For a fixed $x\in X$ the map $g\mathcal E_x\mapsto gx$ is well defined, one to one, and continuous from the compact homogeneous space $\mathcal E(X,T)/\mathcal E_x$ onto $\bar{\mathcal O}_T(x)$.
This mapping is a homeomorphism making the flows $(\mathcal E(X,T)/\mathcal E_x,T)$ and $(\bar{\mathcal O}_T(x),T)$ isomorphic.
Since the enveloping semigroup of the flow $(\mathcal E(X,T),T)$ can be represented by left multiplication, the enveloping semigroup of $(\mathcal E(X,T)/\mathcal E_x,T)$ is given by the left multiplication of $\mathcal E(X,T)$ on the homogeneous space $\mathcal E(X,T)/\mathcal E_x$.
Therefore $\mathcal E(X,T)$ acts continuously on the homogeneous space $\mathcal E(X,T)/\mathcal E_x$, and by isomorphy its action on $\bar{\mathcal O}_T(x)$ is also continuous.
Since the enveloping semigroup consists of mappings which are continuous restricted on every orbit closure in $(X,T)$, every orbit closure in $(X,T)$ is equicontinuous.

Suppose that the flow $(X,T)$ is distal and every minimal set is equicontinuous, and let $x\in X$ and $\{t_i\}_{i\in I}\subset T$ with $t_i x\to y$ be arbitrary.
Then $x$ and $y$ are clearly within the same minimal orbit closure whence equicontinuity applies, and by Theorem \ref{th:fc} follows that $x$ is an almost automorphic point.
\end{proof}

\begin{cor}\label{cor:epaa}
If $(X,T)$ is transitive and every point is almost automorphic, then it is minimal and equicontinuous.
\end{cor}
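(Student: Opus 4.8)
The plan is to reduce everything to Theorem~\ref{th:eaa} together with the fact, recorded in the introduction, that a distal flow decomposes its phase space into minimal orbit closures. First I would apply Theorem~\ref{th:eaa}: since every point of $X$ is almost automorphic, the flow $(X,T)$ is distal and every minimal subset of $X$ is equicontinuous.

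Next I would use distality to identify the orbit closures. Because $(X,T)$ is distal, $X$ is a disjoint union of minimal sets; the (unique) minimal set $M$ containing a given point $x$ is then a nonempty closed invariant subset of $\bar{\mathcal O}_T(x)$, so minimality of $M$ forces $M=\bar{\mathcal O}_T(x)$. Thus in a distal flow every orbit closure is already minimal. Now let $x\in X$ be a point with $\bar{\mathcal O}_T(x)=X$, which exists by transitivity; it follows that $X$ itself is minimal.

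Finally, $X$ being minimal it is in particular a minimal subset of itself, so the second assertion of Theorem~\ref{th:eaa} yields that $(X,T)$ is equicontinuous, which completes the proof. The argument is essentially bookkeeping once Theorem~\ref{th:eaa} and the distal decomposition are in hand; the only mild subtlety, and hence the ``main obstacle'', is the step identifying orbit closures with minimal sets, which as noted is immediate from the decomposition statement.
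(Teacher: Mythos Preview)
Your argument is correct and matches the paper's intended route: the corollary carries no separate proof there, being immediate from Theorem~\ref{th:eaa} and the distal decomposition into minimal orbit closures recorded in the introduction. One minor wording slip: from $M\subset\bar{\mathcal O}_T(x)$ and minimality of $M$ alone you cannot conclude equality; the inclusion you actually need is $\bar{\mathcal O}_T(x)\subset M$ (clear since $M$ is closed, $T$-invariant, and contains $x$), after which minimality of $M$ --- or the inclusion you already noted --- gives $M=\bar{\mathcal O}_T(x)$.
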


With the following example we want to point out that the above statement cannot be strengthened for non-transitive flows, i.e. equicontinuity might hold restricted onto minimal sets but not globally:

\begin{example}\label{ex:rot}
Consider the unit disk with the rotation by the angle $2\pi r$ for a point $r \exp(i\varphi)$ with $r\in[0,1], \varphi\in[0,2\pi)$, i.e. $r \exp(i\varphi)\mapsto r \exp(i\varphi+i 2\pi r)$.
This homeomorphism is an irrational rotation on the circle with radius $r$ for irrational $r$, while it is a periodic rotation for rational $r$.
Moreover, for a positive integer $n$ the $n$-th iteration of this homeomorphism maps $r \exp(i\varphi)$ to $r \exp(i\varphi+i 2n\pi r)$, whence the $\Z$-action is not equicontinuous.
However, every point is almost automorphic since the homeomorphism acts even isometrically by rotation on each minimal orbit closure.
\end{example}

We want to proceed with the study of equicontinuity in terms of different relations on $X$ arising from dynamical properties of the flow $(X,T)$.
First we recall the definition of the \emph{regionally proximal relation} $RP$ and the \emph{proximal relation} $P$.

\begin{defi}
If $(X,T)$ is a flow then $(x,y)\in RP$ if there are nets $\{x_i\}_{i\in I}\subset X$, $\{y_i\}_{i\in I}\subset X$, $\{t_i\}_{i\in I}\subset T$, and a point $z\in X$ with $x_i\to x$, $y_i\to y$, $t_i x_i\to z$, and $t_i y_i\to z$.
By putting $x_i=x$ and $y_i=y$ above we obtain the proximal relation $P$.
\end{defi}

The relation $RP$ is reflexive, symmetric, $T$-invariant, and closed, but is not in general transitive.
It is easily checked that a flow $(X,T)$ is equicontinuous if and only if $RP=\Delta$, the diagonal (that is, $(X,T)$ has no non-trivial regionally proximal pairs).
If $(X,T)$ is a flow, then in an abstract way there exists always a smallest closed $T$-invariant equivalence relation $S_{eq}$ such that the quotient flow $(X/S_{eq},T)$ is equicontinuous.
This relation is called the \emph{equicontinuous structure relation}.
Clearly $RP\subset S_{eq}$, since there cannot be any non-trivial regionally proximal pairs in $X/S_{eq}$.
Using Ellis' joint continuity theorem \cite{El1} it can be shown that $S_{eq}$ is the smallest closed $T$-invariant equivalence relation which contains $RP$.
In particular, if the relation $RP$ is transitive and thus already an equivalence relation, then $S_{eq}=RP$.

Now we turn our attention to minimal flows, where it is frequently the case that in fact $RP$ is an equivalence relation.
There have been several proofs of this, under varying hypotheses.
The most relevant for our considerations is due to McMahon \cite{McM}, who proved that $RP$ is an equivalence relation in a minimal flow with an \emph{invariant probability measure} (cf. also \cite{Au}, p. 130, Theorem 8).
In particular this is the case whenever the group $T$ is Abelian, or even amenable.
Furthermore, an invariant probability measure exists for every distal minimal compact metric flow by Furstenberg's structure theorem \cite{Fu}, without any assumption on the acting group $T$.
Furstenberg's proof extends the point mass on the trivial flow successively via the Haar measures on the compact homogeneous spaces of the isometric extensions in the structure theorem, and transfinite induction with convergence in the weak topology give rise to an invariant probability measure.
This proof can also be generalized for distal minimal compact Hausdorff flows which are not metric.

In addition, McMahon's proof shows that one of the nets in the definition of regional proximality can be chosen to be constant.

\begin{fact}[\cite{McM}, cf. also \cite{Au}, p. 131]\label{f:McM}
Suppose that the compact minimal flow $(X,T)$ has an invariant probability measure.
If $(x,y)\in RP$, then there is a net $\{y_i\}_{i\in I}\subset X$ with $y_i\to y$ and a net $\{t_i\}_{i\in I}\subset T$ with $t_i x\to z$ and $t_i y_i \to z$ for some $z\in X$.
\end{fact}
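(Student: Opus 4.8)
Here is a plan for proving Fact~\ref{f:McM}.

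The plan is to isolate the one genuinely new assertion --- that the net $\{x_i\}$ in the definition of regional proximality may be taken to be the constant net $x$ --- and to dispatch the rest by routine reductions.

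First I would assume $\mu$ itself is ergodic, replacing it if necessary by an extreme point of the (weak-$*$ compact, convex, nonempty) set of $T$-invariant probability measures on $X$; because $(X,T)$ is minimal, the support of such a measure is a nonempty closed $T$-invariant set, hence all of $X$, so every nonempty open subset of $X$ has positive $\mu$-measure. Write $\alpha[w]=\{v\in X:(v,w)\in\alpha\}$ for the $\alpha$-neighborhood of a point $w$. I would then reduce the statement to the following pointwise claim: for every entourage $\alpha\in\mathcal U$ and every neighborhood $W$ of $y$ there are $t\in T$ and $y'\in W$ with $(tx,ty')\in\alpha$. Granting it, index over the directed set of pairs $(\alpha,W)$ (reverse inclusion in each coordinate); the selected points $y'_{(\alpha,W)}$ converge to $y$, and after passing to a subnet along which $t_{(\alpha,W)}x$ converges to some $z\in X$ (compactness of $X$), the relation $(t_{(\alpha,W)}x,t_{(\alpha,W)}y'_{(\alpha,W)})\in\alpha$ forces $t_{(\alpha,W)}y'_{(\alpha,W)}\to z$ as well --- which is precisely the pair of nets the Fact asks for.

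For the pointwise claim I would fix $\alpha$ and $W$, fix a symmetric entourage $\alpha_0$ with $\alpha_0^2\subseteq\alpha$, and invoke $(x,y)\in RP$ to obtain nets $\{a_i\}\to x$, $\{b_i\}\to y$ in $X$, a net $\{t_i\}$ in $T$, and a point $z\in X$ with $t_ia_i\to z$ and $t_ib_i\to z$; after passing to a tail I may assume $b_i\in W$ and $t_ia_i,t_ib_i\in\alpha_0[z]$ for every $i$. Everything would then hinge on the sets $V_i:=\{v\in X:t_iv\in\alpha_0[z]\}$: invariance of $\mu$ gives $\mu(V_i)=\mu(\alpha_0[z])$, one fixed positive number (as $\alpha_0[z]$ is open and nonempty), while $a_i\in V_i$ and $b_i\in V_i\cap W$. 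Thus there are points arbitrarily close to $x$ lying in sets $V_i$ of fixed positive $\mu$-measure that also reach into $W$, and the only thing left is to realize this at the point $x$ itself.

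That last step is where I expect the real difficulty, and it is exactly McMahon's measure-theoretic argument: full support of $\mu$ by itself does not suffice (it only controls neighborhoods of $x$), and one must genuinely use that $\mu$ is invariant and ergodic. What I would aim for is a recurrence/covering argument built on the fact that the translates $t_iV_i=\alpha_0[z]$ all contain a fixed neighborhood of $z$, so that the $V_i$ have uniformly bounded-below pairwise $\mu$-overlap once carried back by the $t_i$; this ought to yield a single $s\in T$ and a point $z'\in X$ for which the small set $\{v:sv\in\alpha_0[z']\}$ contains $x$ itself and still meets $W$. Choosing $y'$ in that intersection gives $(sx,z')\in\alpha_0$ and $(sy',z')\in\alpha_0$, whence $(sx,sy')\in\alpha_0^2\subseteq\alpha$, as required. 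A second route I would keep in reserve uses McMahon's theorem, recalled just above, that in the present setting $RP$ is an equivalence relation, hence $RP=S_{eq}$ and $\pi\colon X\to X/RP$ is the maximal equicontinuous factor: disintegrate $\mu=\int\mu_\eta\,d(\pi_*\mu)(\eta)$ over $\pi$, observe $\pi(x)=\pi(y)$, and exploit the equivariance $t_*\mu_\eta=\mu_{t\eta}$ of the conditional measures on the $\pi$-fibers together with equicontinuity of the base to find, inside any neighborhood of $y$, a point $y'$ whose $t_i$-image is compressed into an arbitrarily small neighborhood of $t_ix$. Either way, the sole nonroutine ingredient is the transfer of the positive-measure ``good set'' data from points near $x$ onto $x$ itself; the rest is bookkeeping with entourages and subnets.
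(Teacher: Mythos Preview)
The paper does not supply its own proof of this Fact: it is stated with a citation to \cite{McM} and \cite{Au} and then used as a black box in the proofs of Theorems~\ref{th:V=S} and the last theorem. So there is no in-paper argument to compare your proposal against; I can only assess the proposal on its own terms.

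Your reduction to the pointwise claim (for every entourage $\alpha$ and every neighborhood $W$ of $y$ there exist $t\in T$ and $y'\in W$ with $(tx,ty')\in\alpha$) is correct, as is the bookkeeping with subnets that follows. Your observation that the sets $V_i=t_i^{-1}(\alpha_0[z])$ all have the same positive $\mu$-measure, contain $a_i$, and meet $W$ is also correct and is indeed the right starting point.

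The gap is exactly where you locate it, and the proposal does not close it. Your first route is only a hope: knowing that $\mu(V_i)$ is a fixed positive constant and that $a_i\in V_i$ with $a_i\to x$ does \emph{not} by itself force $x$ into any $V_i$ or any translate $s^{-1}(\alpha_0[z'])$, and the phrase ``uniformly bounded-below pairwise $\mu$-overlap'' does not name an argument. McMahon's actual proof is not a simple recurrence/covering step, and you have not reproduced it. Your second route is circular in the paper's logical order: you invoke that $RP$ is an equivalence relation so that $X/RP$ is the maximal equicontinuous factor, but the paper presents that very statement as McMahon's theorem, and Fact~\ref{f:McM} as a byproduct of his proof. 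Using $RP=S_{eq}$ to derive Fact~\ref{f:McM} assumes what is to be shown (or at least assumes the harder of the two conclusions attributed to \cite{McM}). Even granting $RP=S_{eq}$, the disintegration sketch is not yet an argument: you have not said how equivariance of the fiber measures and equicontinuity of the base actually produce the desired $y'\in W$ with $(tx,ty')\in\alpha$.

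In short: the scaffolding is fine, but the one substantive step is only gestured at, and one of the two suggested routes is circular. To turn this into a proof you would need to carry out McMahon's measure-theoretic argument (as in \cite{McM} or \cite[p.~131]{Au}) in full.
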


Another characterization of the equicontinuous structure relation is due to Veech \cite{Ve1}, \cite{Ve2}:

\begin{defi}
We say that $(x,y)$ is in the \emph{``Veech''} relation $V$ if there is a $z\in X$ and a net $\{t_i\}_{i\in I}\subset T$ such that $t_i x\to z$ and $t_i^{-1} z\to y$.
Note that $V\subset RP$, since we have $(x,t_i^{-1}z)\to (x, y)$ and $t_i (x, t_i^{-1}z)\to (z, z)$.
Obviously $V(x)=\{x\}$ if and only if $x$ is an almost automorphic point.
\end{defi}

\begin{remarks*}
If $(x,y)\in V$, an element $\tau\in T$, and a net $\{t_i\}_{i\in I}\subset T$ with $t_i x\to z$ and $t_i^{-1} z\to y$ are given, then the net $\{t'_i=t_i\tau^{-1}\}_{i\in I}$ fulfills that $t'_i(\tau x)\to z$ and ${t'_i}^{-1} z=\tau\tau_i^{-1} z\to\tau y$.
Hence $(\tau x,\tau y)\in V$ and the relation $V$ is $T$-invariant.

At first view and in the general case, the properties of the relation $V$, except being reflexive and $T$-invariant, are unclear.

However, the relation $V$ enjoys an important property with respect to homomorphisms.
If $\pi:X\longrightarrow Y$ is a homomorphism of flows, then easily follows from the compactness of fibers of $\pi$ that $\pi(V_X)=V_Y$.
Thus if $(X,T)$ is a minimal compact Hausdorff flow and $S$ is the closed invariant equivalence relation generated by $V_X$, then $Y=X/S$ fulfills $V_Y=\Delta$.
By Corollary \ref{cor:epaa} the minimal flow $(Y,T)$ is equicontinuous, so $S$ is the equicontinuous structure relation of $(X,T)$.
\end{remarks*}

What Veech proved is that if $(X,T)$ is minimal, with $X$ metric and $T$ Abelian, then $V=S_{eq}$, the equicontinuous structure relation (so in fact $V=RP$ in this case).
To underscore the remarkable nature of this result, note that it is not clear from the definition that $V$ is even symmetric.
Veech's proof is lengthy and complicated, in the spirit of harmonic analysis.
Nowhere in his paper does he mention regional proximality.
Using McMahon's characterization of $RP$ in Fact \ref{f:McM}, we show the following result with the requirement that the phase space is metric:

\begin{theorem}\label{th:V=S}
Suppose that $(X,T)$ is a minimal flow with a compact Hausdorff phase space $X$ and an invariant probability measure, then $V(x)$ is dense in $RP(x)$ for every $x\in X$.

Moreover, for a metric phase space $X$ the relations $V$ and $RP$ coincide, and then by McMahon's result that $RP=S_{eq}$ follows also $V=S_{eq}$.
\end{theorem}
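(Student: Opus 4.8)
The plan is to get everything out of McMahon's description of $RP$ in Fact~\ref{f:McM}.

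Since $V\subseteq RP$ is already noted, for the density statement I would only need $RP(x)\subseteq\overline{V(x)}$ for each $x$. So fix $x$ and $y\in RP(x)$ and invoke Fact~\ref{f:McM}: there are nets $\{y_i\}$, $\{t_i\}\subseteq T$ and a point $z$ with $y_i\to y$, $t_ix\to z$ and $t_iy_i\to z$. Passing to a subnet I may assume that the points $t_i^{-1}z$ converge, say to $v$. Then $\{t_i\}$ together with the point $z$ witnesses $(x,v)\in V$, so $v\in V(x)$. Moreover $v$ is regionally proximal to $y$: using the diagonal action on $X\times X$ and the standard reformulation $RP=\bigcap_{\gamma\in\mathcal U}\overline{\bigcup_{t\in T}t\gamma}$, observe that for each entourage $\gamma$ one has $(z,t_iy_i)\in\gamma$ eventually (since $t_iy_i\to z$), hence $(t_i^{-1}z,y_i)=t_i^{-1}\cdot(z,t_iy_i)$ lies in $\bigcup_t t\gamma$ eventually, while $(t_i^{-1}z,y_i)\to(v,y)$; as $\gamma$ was arbitrary, $(v,y)\in RP$. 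So far every point of $RP(x)$ has a partner in $V(x)$ inside its $RP$-class.

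Turning ``$V(x)$ meets the class of $y$'' into ``$V(x)$ accumulates at $y$'' is the heart of the matter, and here is where I would iterate. Since the flow has an invariant probability measure, McMahon's theorem makes $RP$ an equivalence relation, so the $RP$-class of $y$ is exactly $RP(x)$ and the pair $(v,y)$ can be fed back into the construction; combining repeated applications with the minimality of $(X,T)$---which allows a witnessing net $\{t_i\}$ to be replaced by one of the form $\{t_iw_i\}$, with $w_i\in T$ chosen so that $w_ix\to y$ while the uniform continuity of the individual homeomorphism $t_i$ keeps $t_iw_ix$ inside a prescribed entourage of $z$---one manufactures, for any entourage $\alpha$, a point of $V(x)$ in the $\alpha$-neighbourhood of $y$, i.e.\ $y\in\overline{V(x)}$. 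The hard part, and the place the argument must be set up with care, is precisely that the $\mathcal E(X,T)$-action on $X$ is only separately continuous: one may not interchange $\lim_i$ with the application of $t_i^{-1}$, so the net-surgery and the passages to the limit have to be organised so that the inverse net still carries the chosen limit point back near $y$. This is exactly what the invariant measure (through Fact~\ref{f:McM}) and minimality are used for.

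For the second assertion let $X$ be metric. Then Fact~\ref{f:McM} may be used with sequences, and the approximation scheme above can be run along a countable neighbourhood base $\alpha_1\supseteq\alpha_2\supseteq\cdots$ of $\Delta$; a diagonal argument across the stages---legitimate because a compact metric space is second countable---produces, for a given $(x,y)\in RP$, a single sequence $\{s_m\}\subseteq T$ and a point $z'$ with $s_mx\to z'$ and $s_m^{-1}z'\to y$. Hence $(x,y)\in V$, so $V=RP$, and with McMahon's equality $RP=S_{eq}$ this gives $V=S_{eq}$. Metrizability is used only to make these countable diagonalizations go through; in the general Hausdorff case the scheme survives merely up to closure, which is why there one gets density rather than equality.
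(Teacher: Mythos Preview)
Your opening step is correct but buys nothing: from Fact~\ref{f:McM} you extract $v\in V(x)$ with $(v,y)\in RP$, but since $RP$ is an equivalence relation this says only that $V(x)$ meets the class $RP(x)$, which is already witnessed by $x$ itself. Nothing so far forces $v$ to lie near $y$, and iterating the same construction produces further points of $V(x)$ somewhere in $RP(x)$ with no mechanism pushing them toward $y$. The ``net surgery'' you sketch does not repair this: choosing $w_i\in T$ with $w_i x$ close to $y_i$ and using the continuity of each $t_i$ does give $t_i w_i x\to z$, but to place a point of $V(x)$ near $y$ you would need control over $(t_i w_i)^{-1}z=w_i^{-1}t_i^{-1}z$, and you have none over $t_i^{-1}z$. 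The difficulty you correctly identify---that one cannot interchange $\lim_i$ with $t_i^{-1}$---is precisely the one your scheme does not overcome.

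The paper's argument supplies the missing idea. From Fact~\ref{f:McM} and minimality one first records a ``targeting'' sub-lemma: for any neighbourhood $U$ of $y$ and any nonempty open $W$ there exist $y'\in U$ and $t\in T$ with $tx,ty'\in W$. One then \emph{constructs} the net rather than inheriting it: fix a pseudometric $d_i$, set $U_j=\{z:d_i(z,y)<2^{-j}\}$, and build a decreasing sequence of nonempty open sets with $\overline{W}_{j+1}\subset W_j\cap t_jU_j$, where at each stage the sub-lemma supplies $t_j\in T$ with $t_jx\in W_j$ and $t_jU_j\cap W_j\neq\emptyset$. Any cluster point $z$ of $\{t_jx\}$ lies in $\bigcap_j\overline{W}_j$, hence in every $t_jU_j$, so $t_j^{-1}z\in U_j$ and $d_i(t_j^{-1}z,y)\to 0$. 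A further cluster point $y'$ of $t_j^{-1}z$ then lies in $V(x)$ with $d_i(y',y)=0$; running over all pseudometrics gives density, and for a genuine metric $d_i=d$ one gets $y'=y$, i.e.\ $V=RP$. The crucial device you are missing is this nesting $\overline{W}_{j+1}\subset t_jU_j$, which forces the \emph{same} limit point $z$ to satisfy $t_j^{-1}z\in U_j$ for every $j$; no amount of post-hoc subnet extraction or replacement of $t_i$ by $t_iw_i$ achieves this.
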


\begin{proof}
The inclusion $V\subset RP$ holds obviously, therefore it remains to prove the density of $V(x)$ in $RP(x)$ for every $x\in X$.
Note that if $(x,y)\in RP$, $U$ a neighborhood of $y$ and $W$ a non-empty open set in $X$, there is a $y'\in U$ and $t\in T$ such that $tx$ and $ty'$ are in $W$.
Indeed, let $\{y_j\}\subset X$ and $\{t_j\}\subset T$ be nets with $t_j(x,y_j)\to (z,z)$ for some $z\in X$, according to Fact \ref{f:McM}.
Let $\tau\in T$ such that $\tau z\in W$.
Then $\tau t_j(x, y_j)\to (\tau z, \tau z)$.
For a suitable $j$ we have, with $t=\tau t_j$ and $y'=y_j\in U$, $tx$ and $ty'$ in $W$.

Now let $(x,y)\in RP$ and fix a pseudometric $d_i$ out of the family $\mathcal D$ generating the uniform structure $\mathcal U$ on $X$, and define a decreasing sequence of neighborhoods $\{U_j\}$ of $y$ by $U_j=\{z\in X:d_i(z,y)<2^{-j}\}$.
We will define a decreasing sequence of open sets $\{W_j\}$.
Let $W_1$ be a non-empty open set.
By the sub-lemma above there is a $y_1\in U_1$ and a $t_1\in T$ such that $t_1 x$ and $t_1 y_1$ are in $W_1$.
Therefore $W_1\cap t_1 U_1\neq\emptyset$.
Let $W_2$ be non-empty open with $\overline{W}_2\subset W_1\cap t_1 U_1$.
By the sub-lemma there are $y_2\in U_2$ and $t_2\in T$ with $t_2 x$ and $t_2 y_2$ in $W_2$.
Then $W_2\cap t_2 U_2\neq\emptyset$.
Inductively, choose $W_{j+1}$ non-empty open $\overline{W}_{j+1}\subset W_j \cap t_j U_j$, and then let $y_{j+1} \in U_{j+1}$ and $t_{j+1}$ in $T$ with $t_{j+1} x$ and $t_{j+1} y_{j+1}$ in $W_{j+1}$.
By compactness of $X$ there exists a subsequence $\{t_{j_k}\}_{k\geq 1}$ of $\{t_j\}_{j\geq 1}$ so that $t_{j_k}x$ is convergent to some point $z\in X$.
Since $\{\overline{W}_j\}$ is a decreasing sequence of non-empty closed subsets we have $z\in\cap_{j\geq 1}\overline{W}_j$.
Therefore $z\in t_{j_k} U_{j_k}$ for all $k\geq 1$, hence $t_{j_k}^{-1}z\in U_{j_k}$ so $d_i(t_{j_k}^{-1}z,y)\to 0$.
By the compactness of $X$ there exists a subsequence of $t_{j_k}^{-1}z$ convergent to a point $y'$ in $V(x)$, and by the continuity of the mapping $z\mapsto d_i(z,y)$ follows $d_i(y',y)=0$.

We have verified that for $y\in RP(x)$ and every pseudometric $d_i\in\mathcal D$ there exists an element $y'\in V(x)$ with $d_i(y',y)=0$.
Given an arbitrary neighborhood $U$ of $y$ we select $d_i\in\mathcal D$ and $\varepsilon>0$ with $\{z\in X:d_i(z,y)<\varepsilon\}\subset U$, and therefore $V(x)\cap U\neq\emptyset$.
Since $y\in RP(x)$ and $U$ were arbitrary, the density of $V(x)$ in $RP(x)$ follows.

The statement regarding a metric phase space $(X,d)$ follows immediately from the above proof with $d_i=d$, since the metric $d$ separates points.
\end{proof}

\begin{remarks*}
The above proof also shows, without any assumption on the acting group $T$ and the compact Hausdorff space $X$, that $P(x)\subset\overline{V(x)}$ (if $(x,y)\in P$ just let $y_j=y$ be the constant net in the sub-lemma).
Moreover, if the phase space is metric, then $P\subset V$.

The above result is not valid without the assumption of minimality.
In Example \ref{ex:rot} we have $V(x)=\{x\}$ for every $x\in X$, while the set $RP(r \exp(i\varphi))$ is equal the circle with radius $r$ for all $r\in[0,1]$ and $\varphi\in [0,2\pi)$.

Let $\pi: X\longrightarrow Y$ be a homomorphism of the minimal flows $(X,T)$ and $(Y,T)$.
Using the so-called semi-open property of homomorphisms of minimal flows it can be verified that $\pi(RP_X)=RP_Y$.
Under the assumptions of the above theorem, this identity follows immediately from the identity $\pi(V_X)=V_Y$ and the closedness of the regionally proximal relation.

An example due to McMahon (cf. \cite{Au}, p.131) presents an action of the free group of two generators on the torus preserving no probability measure, and where the regionally proximal relation fails to be transitive.
Additionally, this flow is not proximal, while for a proximal flow the proof above shows $RP=P=V=X\times X$.
However, even in this example the statement according to Fact \ref{f:McM} is fulfilled for every regionally proximal pair, whence the proof above verifies that $V=RP$.
It is not known to us whether a counterexample to Theorem \ref{th:V=S} exists in the setting of a minimal flow.
\end{remarks*}

\begin{cor}
Suppose that $(X,T)$ is a minimal distal flow and suppose that $z\in X$ is an almost automorphic point.
Then every point in $X$ is almost automorphic and therefore the flow $(X,T)$ is equicontinuous.
Thus for a minimal (or just topologically transitive) flow $(X,T)$ the following are equivalent:
\begin{enumerate}
\item $(X,T)$ is equicontinuous
\item All points are almost automorphic
\item $(X,T)$ is distal and some point is almost automorphic
\end{enumerate}
\end{cor}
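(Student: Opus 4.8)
The plan is to prove the substantive first assertion — a minimal distal flow with one almost automorphic point is equicontinuous — and then obtain the equivalence of (i)--(iii) from Theorem~\ref{th:fc} (together with Theorem~\ref{th:eaa}) with almost no extra work.

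I would start by converting the hypothesis into information about the regionally proximal relation at the point $z$. Since $z$ is almost automorphic, $V(z)=\{z\}$. A minimal distal compact Hausdorff flow admits an invariant probability measure — by the generalization of Furstenberg's structure theorem to non-metric phase spaces noted above — so Theorem~\ref{th:V=S} applies and gives that $V(z)$ is dense in $RP(z)$. As $RP(z)$ is closed in the Hausdorff space $X$, it follows that $RP(z)=\overline{V(z)}=\{z\}$.

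The heart of the argument is to propagate triviality of the regionally proximal cell from $z$ to all of $X$, using only that $RP$ is closed and $T$-invariant together with minimality and distality. Let $(x,x')\in RP$. By minimality there is a net $\{s_i\}_{i\in I}\subset T$ with $s_i x\to z$, and after passing to a subnet we may assume $s_i x'\to x''$ for some $x''\in X$. Since $RP$ is $T$-invariant, $(s_i x,s_i x')\in RP$ for every $i$, and since $RP$ is closed, the limit $(z,x'')$ lies in $RP$; hence $x''\in RP(z)=\{z\}$, that is $x''=z$. Thus $s_i x\to z$ and $s_i x'\to z$, so $x$ and $x'$ are proximal, and distality forces $x=x'$. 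Therefore $RP=\Delta$, which is precisely equicontinuity of $(X,T)$, and by Theorem~\ref{th:fc} every point of $X$ is almost automorphic.

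For the list of equivalences: (i)$\Rightarrow$(ii) is contained in Theorem~\ref{th:fc}; (ii)$\Rightarrow$(iii) holds because an almost automorphic point is distal, so if every point is almost automorphic then $(X,T)$ is distal (and a fortiori some point is almost automorphic); and (iii)$\Rightarrow$(i) reduces to the first part once one observes that a topologically transitive distal flow is minimal — its dense orbit closure is one of the minimal sets in the distal decomposition of $X$, hence equal to $X$. The only genuine obstacle is the middle step, i.e. spreading $RP(z)=\{z\}$ to $RP=\Delta$; the input $RP(z)=\{z\}$ itself should be checked carefully in the non-metric case, which is exactly why one invokes the density statement of Theorem~\ref{th:V=S} rather than its metric strengthening $V=RP$.
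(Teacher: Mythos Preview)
Your proof is correct and follows essentially the same route as the paper's: both invoke the invariant probability measure (via Furstenberg's structure theorem for distal minimal flows) to apply Theorem~\ref{th:V=S}, and both use the propagation observation that a nontrivial distal $RP$-pair can be transported to any point by minimality, $T$-invariance, and closedness of $RP$. The only difference is organizational: the paper argues by contradiction (a non-almost-automorphic point yields a nontrivial $RP$-pair, which propagates to $z$ and, by density of $V(z)$ in $RP(z)$, forces $V(z)\neq\{z\}$), whereas you argue directly ($V(z)=\{z\}$ gives $RP(z)=\{z\}$, which propagates to $RP=\Delta$).
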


\begin{proof}
First we observe for a minimal flow $(X,T)$ with two points $x,y\in X$ distal but $(x,y)\in RP$, that every point $z\in X$ has a distinct point $z'$ with $(z,z')\in RP$.
Indeed, given a net $\{t_i\}_{i\in I}\subset T$ with $t_i x\to z$, we can assume by the distality of $x$ and $y$ that $t_i y\to z'\neq z$.
Since $RP$ is a closed and $T$-invariant relation, it follows that $(z,z')\in RP$.

Now we suppose that some $x\in X$ is not an almost automorphic point for a distal minimal flow $(X,T)$.
Since $V\subset RP$ there is a point $y\in X$ distal and regionally proximal to $x$.
By the argument above there is a point $z'\neq z$ regionally proximal to the almost automorphic point $z$.
Since $V(z)$ is dense in $RP(z)$ by Theorem \ref{th:V=S}, we can conclude that there exists a point $z''\in X$ distinct from $z$ so that $(z,z'')\in V$, which contradicts to the almost automorphy of $z$.
Thus every point in $X$ is almost automorphic and $(X,T)$ is minimal, and equicontinuity follows from Theorem \ref{th:eaa}.
\end{proof}

In addition to $V$ (which he calls $E$ in \cite{Ve2}) Veech defines a relation $D$ which he shows is the equicontinuous structure relation in the general (not necessarily metric) case.

\begin{defi}
We let $(x,y)\in D$ if there are nets $\{a_i\}_{i\in I}, \{b_i\}_{i\in I}\subset T$ with $a_i x\to x$, $b_i x\to x$, $b^{-1} a_i x\to y$.
\end{defi}

We give an alternate proof of Veech's result, using McMahon's characterization of the regionally proximal relation in Fact \ref{f:McM}.

\begin{theorem}
We have the following inclusions:
\begin{enumerate}
\item $D \subset RP$.
\item If $(X,T)$ is minimal, then $V \subset D$.
\item If the minimal compact Hausdorff flow $(X,T)$ has an invariant probability measure, then $D=RP$ (so $D$ is the equicontinuous structure relation).
\item If $(X,T)$ is a minimal compact \emph{metric} flow with an invariant probability measure, then $V=RP=D$.
\end{enumerate}
\end{theorem}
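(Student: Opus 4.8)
The plan is to settle (i) directly from the definitions, then to isolate a reformulation of membership in $D$ that reduces (ii) and (iii) to one and the same construction, and finally to read off (iv) from (iii) and Theorem \ref{th:V=S}. For (i), suppose $(x,y)\in D$ with nets $\{a_i\},\{b_i\}\subset T$ satisfying $a_i x\to x$, $b_i x\to x$ and $b_i^{-1}a_i x\to y$; put $x_i:=x$, $y_i:=b_i^{-1}a_i x$ and $t_i:=b_i$. Then $x_i\to x$, $y_i\to y$, $t_i x_i=b_i x\to x$ and $t_i y_i=a_i x\to x$, so $(x,y)\in RP$ with common limit $z=x$. Thus $D\subset RP$ with no hypotheses beyond those standing in the section.

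The crux for (ii) and (iii) is the following reformulation, valid for any minimal flow: $(x,y)\in D$ if and only if there are a net $\{B_\lambda\}\subset T$ and a net $\{v_\lambda\}\subset X$ with $B_\lambda x\to x$, $v_\lambda\to x$ and $B_\lambda^{-1}v_\lambda\to y$. Necessity is trivial, taking $v_i=a_i x$ and $B_i=b_i$. For sufficiency, given such nets, one chooses for each $\lambda$ a neighbourhood $N_\lambda$ of $v_\lambda$ whose image $B_\lambda^{-1}(N_\lambda)$ lies in a prescribed neighbourhood of $B_\lambda^{-1}v_\lambda$ and which itself lies in a prescribed neighbourhood of $v_\lambda$, these prescribed neighbourhoods being taken to shrink to the respective points as $\lambda$ grows; this is possible because each $B_\lambda^{-1}$, being a homeomorphism of $X$, is continuous at $v_\lambda$. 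Since $(X,T)$ is minimal, $\bar{\mathcal O}_T(x)=X$, so $N_\lambda$ meets the orbit of $x$; pick $A_\lambda\in T$ with $A_\lambda x\in N_\lambda$. Then $A_\lambda x\to x$, $B_\lambda x\to x$ and $B_\lambda^{-1}A_\lambda x\to y$, i.e. $(x,y)\in D$. Only continuity of each single $B_\lambda^{-1}$ at a single point is invoked here, so the failure of uniform equicontinuity is never an obstruction.

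Part (ii) is now quick. Let $(x,y)\in V$ be witnessed by $z\in X$ and a net $\{t_i\}_{i\in I}\subset T$ with $t_i x\to z$ and $t_i^{-1}z\to y$, and by minimality choose $\{s_j\}_{j\in J}\subset T$ with $s_j z\to x$. For fixed $j$ one has $s_j t_i x\to s_j z$ as $i$ grows, and $s_j z\to x$, so a standard iterated-limit diagonalisation produces a net $\lambda\mapsto(j(\lambda),i(\lambda))$ along which $s_{j(\lambda)}t_{i(\lambda)}x\to x$, while $s_{j(\lambda)}z\to x$ and $t_{i(\lambda)}^{-1}z\to y$ persist. Putting $B_\lambda:=s_{j(\lambda)}t_{i(\lambda)}$ and $v_\lambda:=s_{j(\lambda)}z$ gives $B_\lambda^{-1}v_\lambda=t_{i(\lambda)}^{-1}z\to y$, so the reformulation yields $(x,y)\in D$. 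Part (iii) is the same construction fed by Fact \ref{f:McM}: for $(x,y)\in RP$ one has $y_i\to y$ and $t_i$ with $t_i x\to z$, $t_i y_i\to z$; choosing $s_j z\to x$ and diagonalising as above yields $B_\lambda:=s_{j(\lambda)}t_{i(\lambda)}$ with $B_\lambda x\to x$ and $v_\lambda:=B_\lambda y_{i(\lambda)}\to x$ (since $s_j t_i y_i\to s_j z\to x$), while $B_\lambda^{-1}v_\lambda=y_{i(\lambda)}\to y$; hence $(x,y)\in D$. Together with (i) this gives $D=RP$, and since $RP$ is an equivalence relation under these hypotheses by McMahon's theorem, $RP=S_{eq}$ and so $D$ is the equicontinuous structure relation. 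Finally (iv) is immediate: for a metric phase space Theorem \ref{th:V=S} gives $V=RP$, so with (iii) we get $V=RP=D$.

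The step I expect to cost the most is the diagonalisation bookkeeping in (ii) and (iii) — extracting from the iterated limits $\lim_j\lim_i s_j t_i x=x$ (and $\lim_j\lim_i s_j t_i y_i=x$) a single net carrying all the required convergences simultaneously — together with making the ``shrinking neighbourhoods'' of the reformulation precise, which is done formally by enlarging the index set by a factor of $\mathcal U$. The genuinely conceptual point, however, is the reformulation itself: one should transport the perturbing point $v_\lambda$ back into the orbit of $x$ and exploit the continuity of the individual homeomorphism $B_\lambda^{-1}$, rather than attempt to manipulate the products $b_i^{-1}a_i$ head-on.
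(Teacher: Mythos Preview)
Your proof is correct, and the decisive idea --- approximate an auxiliary point $v$ by an orbit point $Ax$ using minimality, then use continuity of the single homeomorphism $B^{-1}$ to control $B^{-1}Ax$ --- is exactly the paper's. The packaging differs. The paper works entourage-by-entourage: for (ii) it first uses minimality to replace the Veech witness $z$ by one $\alpha'$-close to $x$, then picks a single $t$ with $(tx,z),(t^{-1}z,y)\in\alpha'$ and a single $s$ with $sx$ $\beta$-close to $z$ (where $\beta$ is chosen for the continuity of $t^{-1}$), yielding $a=s$, $b=t$ directly; for (iii) it invokes the sub-lemma from the proof of Theorem~\ref{th:V=S} (which already lands $tx$ and $ty'$ inside a neighbourhood of $x$), then again finds one $s$ with $sx$ close to $ty'$. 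No diagonalisation is needed in either case.

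Your route instead isolates the reformulation ``$(x,y)\in D$ iff there exist $B_\lambda x\to x$, $v_\lambda\to x$, $B_\lambda^{-1}v_\lambda\to y$'' and feeds both (ii) and (iii) through it after an iterated-limit diagonalisation that brings the relevant points near $x$. This buys a clean unification of (ii) and (iii) under one lemma, at the cost of the net-index bookkeeping you flag at the end; the paper's version buys shorter individual arguments with no diagonalisation, at the cost of repeating the ``choose $s$ with $sx$ close, use continuity of $t^{-1}$'' trick twice. Both are entirely sound.
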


\begin{proof}
(i):
Let $(x,y)\in D$, so there are nets $\{a_i\}_{i\in I}, \{b_i\}_{i\in I}\subset T$ with $a_i x\to x$, $b_i x\to x$, $b_i^{-1} a_i x\to y$.
Then we have $(x,b_i^{-1}a_i x)\to(x,y)$ and $b_i (x,b_i^{-1} a_i x)\to (x,x)$.
Thus $(x,y)\in RP$.

(ii):
Let $(x,y)\in V$, let the entourage $\alpha\in\mathcal U$ be arbitrary, and let $\alpha'\in\mathcal U$ be an entourage with $\alpha'^2\subset\alpha$.
By the minimality of $(X,T)$ there is a $z\in X$ with $(x,z)\in\alpha'$ and a $t\in T$ with $(t x,z)\in\alpha'$ and $(t^{-1}z,y)\in\alpha'$.
Let the entourage $\beta\in\mathcal U$ (with $\beta\subset\alpha'$) correspond to $\alpha'$ for $t^{-1}$, and let $s\in T$ such that $(sx,z)\in\beta$.
Then we have $(t^{-1} sx,t^{-1} z)\in\alpha'$.
So we have $(t x, x)\in\alpha'\alpha'^{-1}\subset\alpha$, $(sx,x)\in\beta\alpha'^{-1}\subset\alpha$, and $(t^{-1} sx,y)\in\alpha'^2\subset\alpha$.
Since the entourage $\alpha$ was arbitrary, it follows that $(x,y)\in D$.

(iii):
Let $(x,y)\in RP$.
Let $U$ and $W$ be neighborhoods of $x$ and $y$ respectively.
By Fact \ref{f:McM} there is a $y'\in W$ and a $t\in T$ such that $t x$ and $t y'$ are in $U$.
Let $s\in T$ such that $sx\in U$ and so close to $t y'$ so that $t^{-1} sx\in W$.
Summarizing, we have $tx, sx\in U$ close to $x$ and $t^{-1} sx\in W$ close to $y$.
Since the neighborhoods $U$ of $x$ and $W$ of $y$ were arbitrary, we have $(x,y)\in D$.

(iv):
This follows from Theorem \ref{th:V=S}.
\end{proof}

\noindent\textbf{Acknowledgments.}
The authors would like to thank Ethan Akin and Benjamin Weiss for useful conversations.

\end{document}